\def\R{\mathbb R}
\def\C{\mathbb C}
\def\SU{\mathrm{SU}}
\def\SL{\mathrm{SL}}
\def\g{\mathfrak g}
\def\a{\mathfrak a}
\def\n{\mathfrak{n}}
\def\h{\mathfrak{h}}
\def\t{\mathfrak{t}}
\def\d{\mathfrak{d}}
\def\k{\mathfrak{k}}
\renewcommand\b{\mathfrak b}            
\def\bw{\mathbf w}
\def\p{\mathfrak p}          
\newtheorem{theorem}{Theorem}[section]
\newtheorem{prop}[theorem]{Proposition}
\newtheorem{corollary}[theorem]{Corollary}
\newtheorem{lemma}[theorem]{Lemma}
\newtheorem{example}{Example}
\theoremstyle{definition}
\newtheorem{notation}[theorem]{Notation}
\theoremstyle{remark}
\newtheorem{remark}[theorem]{Remark}
\newtheorem{acknowledgements}[theorem]{Acknowledgements}
\begin{document}
\title[Demazure and Bott-Samelson Resolutions]{Equivalence of Demazure and Bott-Samelson Resolutions via Factorization}
\date{\today}
\author{Arlo Caine}
\email{jacaine@cpp.edu}

\begin{abstract}
Let $G$, $B$, and $H$ denote a complex semi-simple algebraic group, a Borel subgroup of $G$, and a maximal complex torus in $B$, respectively.  Choose a compact real form $K$ of $G$ such that $T=K\cap H$ is a maximal torus in $T$.  Then there are two models for the flag space of $G$: the complex quotient $X=G/B$ and the real quotient $K/T$.  These models are smoothly equivalent via the map $\tilde{\mathbf k}\colon G/B\to K/T$ induced by factorization in $G$ relative to the Iwasawa decomposition $G=KAN$, where $N$ is the nilradical of $B$ and $H=TA$.  Likewise, there are two models for resolutions of the Schubert subvarieties $\overline{X_w}\subset X$: the Demazure resolution of $\overline{X_w}$ which is constructed via a complex algebraic quotient and the Bott-Samelson resolution of $\mathbf k(\overline{X_w})$ which is constructed as a real quotient of compact groups.  This paper makes explicit the equivalence and compatibility of these two resolutions using factorization.  As an application, we can compute the change of variables map relating the standard complex algebraic coordinates on $X_w$ to Lu's real algebraic coordinates on $\tilde{\mathbf k}(X_w)$.
\end{abstract}

\maketitle

\section{Introduction}

Let $G$ be a complex semisimple algebraic group and let $B$ be a Borel subgroup of $G$.  Let $H$ be a maximal complex torus in $B$ (and hence in $G$) and let $W=N_G(H)/H$ denote the Weyl group of $G$ with respect to $H$. The complex quotient $X=G/B$ is a complex projective variety and a model for the flag space of $G$.  Choose a compact real form $K$ of $G$ such that $T=K\cap H$ is a maximal torus in $K$.  Then write $H=TA$ for the Cartan decomposition of $H$ under the Cartan decomposition $G$ relative to $K$.  The real quotient $K/T$ is another model for the flag space of $G$.

The typical argument for the equivalence of the models $G/B$ and $K/T$ starts with the observation that the canonical inclusion $K/T\to G/B$ is an injective immersion.  Let $N$ denote the nil-radical of $B$.  Using the Iwasawa decomposition $G=KAN$, and the fact that $N$ is stable under the adjoint action of $T$, one proves that the map is surjective. Since the domain is compact, the inverse map must be smooth by an inverse function theorem argument.  Hence, the canonical inclusion $K/T\to G/B$ is a diffeomorphism.

However, the inverse of this map can be made more explicit.  Let $D=AN$ so that the Iwasawa decomposition has the form $G=KD$. Since multiplication in $G$ induces a diffeomorphism $K\times D\to G$, we know that each element $g\in G$ has a unique factorization of the form $g=\mathbf k(g)\mathbf d(g)$ where $\mathbf k(g)\in K$ and $\mathbf d(g)\in D$.  This defines smooth functions $\mathbf k\colon G\to K$ and $\mathbf d\colon G\to D$.  The map $\mathbf k$ is right $D$-invariant and right $T$-equivariant and thus induces a map $\tilde{\mathbf k}\colon G/B\to K/T$ since $B=TD$ and $D$ is stable under the adjoint action of $T$.  If $[g]_{G/B}$ denotes the class of $g\in G$ in $G/B$ and $[k]_{K/T}$ denotes the class of $k\in K$ in $K/T$, then the inclusion $\tilde{\iota}\colon K/T\to G/B$ is given by $[k]_{K/T}\mapsto [k]_{G/B}$ while $\tilde{\mathbf k}\colon G/B\to K/T$ is given by $\tilde{\mathbf k}[g]_{G/B}=[\mathbf k(g)]_{K/T}$ by definition.  Under the composition $K/T\to G/B\to K/T$ one has $[k]_{K/T}\mapsto [k]_{G/B}\mapsto [\mathbf k(k)]_{K/T}=[k]_{K/T}$ since $\mathbf k$ restricts to the identity on $K$.  Thus, $\tilde{\mathbf k}$ is the inverse of the canonical inclusion $K/T\to G/B$.

In a similar fashion, this paper makes explicit the equivalence of two different models of resolutions of Schubert varieties $\overline{BwB}\subset X=G/B$.  Recall that the group $B$ acts on $G$ from the left by multiplication and this induces a left action of $B$ on $X$.  The orbits of $B$ on $X$ are finite in number and indexed by the elements of $W$.  Each orbit $X_w=BwB$, for $w\in W$, is called a Schubert cell in $X$ and is a complex subvariety of $X$ isomorphic to $\C^{\ell(w)}$ where $\ell(w)$ is the length of $w$.  Its closure in $X$, denoted $\overline{X_w}$, is called a Schubert variety.

In general, the Schubert varieties are singular.  There are two similar constructions which produce smooth resolutions of these spaces depending on a reduced decomposition of $w$ into a sequence $\bw$ of simple reflections.
\begin{enumerate}
\item The Bott-Samelson resolution \cite{BottSamelson} considers the Schubert variety as a compact topological subspace $\tilde{\mathbf k}(\overline{X_w})$ of $K/T$ and uses the subgroups of $K$ of minimal rank associated to the simple reflections in $\bw$ to construct the resolving space $\mathcal{BS}_\bw$.
\item The Demazure resolution \cite{Demazure} considers $\overline{X_w}$ itself as a complex algebraic subvariety of $X=G/B$ and uses the minimal parabolic subgroups of $G$ containing $B$ associated to the simple reflections in $\bw$ to construct the resolving space $\mathcal{D}_\bw$.
\end{enumerate}
There is a canonical inclusion $\mathcal{BS}_\bw\to \mathcal{D}_\bw$ which one can argue is a proper bijective immersion. Since the domain is compact, it is possible to conclude, abstractly, that the map is a diffeomorphism by an inverse function theorem argument.  However, as with the canonical inclusion $K/T\to G/B$, the inverse of this inclusion can be made more explicit.  This is the main point of Section \ref{EquivFactSection} and a new result.  In this setting however, the inverse is induced from a map built from both of the factorization maps $\mathbf k$ and $\mathbf d$ together with the multiplication map on $G$.  As an application, we show in Section \ref{ApplicationSection} how to use this equivalence to compute the change of coordinates between the standard holomorphic coordinates on $X_w$ and Lu's real algebraic coordinates on $\tilde{\mathbf k}(X_w)$.

\begin{acknowledgements}
This research was supported by the Provost's Teacher-Scholar program at California State Polytechnic University Pomona.  The author is grateful for this support and would also like to thank Doug Pickrell and Sam Evens for useful discussions.
\end{acknowledgements}

\section{Demazure vs. Bott-Samelson Resolutions\label{DvsBSresolutionsSection}}

The point of this section is review the constructions of the Demazure and Bott-Samelson resolutions and establish notation that will be used in the proof of the main results in Section \ref{EquivFactSection}.  Let $\g$ denote the Lie algebra of $G$, and let $\b$ and $\h$ denote the subalgebras of $\g$ corresponding to $B$ and $H$.  Then $\h$ is a Cartan subalgebra of $\g$ and we can decompose $\g$ into root spaces under the adjoint action of $\h$.  The Borel subalgebra $\b$ determines a choice of positive roots and we let $\Delta$ denote the set of simple positive roots.  The parabolic subalgebras of $\g$ containing $\b$ are in one-to-one correspondence with the subsets of $\Delta$.  The minimal such parabolic subalgebras (other than $\b$ itself) are those corresponding to the singleton subsets of $\Delta$.  Recall that each simple positive root determines a unique simple reflection in $W=N_G(H)/H$.  It will be convenient to write $\p_s$ for the minimal parabolic subalgebra of $\g$ containing $\b$ determined by the simple positive root whose associated simple reflection is $s\in W$.  Let $P_s$ denote the corresponding parabolic subgroup of $G$ containing $B$.

Let $w\in W$ be given and suppose that $\bw=(s_1,s_2,\dots,s_\ell)$ is a finite sequence of simple reflections associated to positive simple roots such that $w=s_1s_2\dots s_\ell$.  Note that the subscript $i$ in $s_i$ indicates its position in the sequence, not that it is the $i^{th}$ simple positive root in some fixed enumeration of those roots.  Then $B^\ell$ acts freely from the right on
\begin{equation}
P_\bw=P_{s_{1}}\times P_{s_{2}}\times \dots \times P_{s_{\ell}}
\end{equation}
by the action
\[
(p_1,p_2,\dots,p_\ell).(b_1,b_2,\dots,b_\ell)=(p_1b_1,b_1^{-1}p_2b_2,\dots,b_{\ell-1}^{-1}p_\ell b_\ell)
\]
and we denote the quotient $P_\bw/B^\ell$ by
\begin{eqnarray}
\mathcal {D}_{\bw} & = & P_\bw/B^\ell \\
& = & P_{s_{1}}\times_{B} P_{s_{2}} \times_{B} \dots \times_{B} P_{s_{\ell}}/B
\end{eqnarray}
for the total space of the Demazure resolution determined by $\bw$.  It is a smooth complex projective variety.

Multiplication $P_\bw=P_{s_{1}}\times P_{s_{2}}\times \dots \times P_{s_{\ell}}\to G$ is equivariant for the right action of $B^\ell$ on $P_{\bw}$ and the right action of $B$ on $G$ and thus induces a complex anlaytic map
\[
\rho_\bw\colon \mathcal{D}_\bw\to X=G/B.
\]
When the sequence $\bw$ is reduced, i.e., $\ell=\ell(w)$, the image of $\rho_\bw$ is the Schubert variety $\overline{X_w}$.  Each decomposition of $w$ as a reduced sequence $\bw$ of simple reflections thus determines a resolution of singularities for $\overline{X_w}$ restricting to an isomorphism $\rho_\bw^{-1}(X_w)\to X_w$ over the Schubert cell $X_w$.

Alternatively, if one chooses a compact real form $\k$ of $\g$ such that $\t=\k\cap\h$ is a Cartan subalgebra of $\k$, then the intersections $K_{s_i}=K\cap P_{s_i}$, for $i=1,2,\dots,\ell$ are the subgroups of $K$ of minimal rank containing $T$.  These are the subgroups of $K$ containing $T$ with roots $\alpha_i$ and $-\alpha_i$. Let
\begin{equation}
K_\bw=K_{s_1}\times K_{s_2}\times\dots\times K_{s_\ell}
\end{equation}
and note that $K_\bw$ is a real compact subgroup of $P_\bw$.  The free action of $B^\ell$ on $P_\bw$ restricts to a free action of $T^\ell$ on $P_\bw$ which stabilizes $K_\bw$, since $T$ is a subgroup of $K_{s_i}$ for each $i=1,2,\dots,\ell$.  We denote the quotient by
\begin{eqnarray}
\mathcal{BS}_\bw & = & K_\bw/T^\ell  \\
&  = & K_{s_1}\times_T K_{s_2}\times_T \dots \times_T K_{s_\ell}/T
\end{eqnarray}
for the Bott-Samelson resolution.  It is a smooth compact manifold.

Multiplication $K_\bw=K_{s_1}\times K_{s_2}\times \dots \times K_{s_\ell}\to K$ is equivariant for the right action of $T^\ell$ on $K_\bw$ and the right action of $T$ on $K$ and thus induces a smooth map
\[
\rho_\bw^K\colon \mathcal{BS}_\bw \to K/T.
\]
When the sequence $\bw$ is reduced, the image of this map is $\tilde{\mathbf k}(\overline{X_w})$ in $K/T$.  Each decomposition of $w$ as a reduced sequence $\bw$ of simple reflections thus determines a smooth manifold $\mathcal{BS}_\bw$ and a smooth map $\rho_\bw^K\colon \mathcal{BS}_\bw\to \tilde{\mathbf{k}}(\overline{X_w})$ which is a diffeomorphism onto $\tilde{\mathbf k}(X_w)$ when restricted to the pre-image of $\tilde{\mathbf k}(X_w)$.

Just as the inclusion $K\to G$ induces a canonical inclusion $K/T\to G/B$, the inclusion $K_\bw\to P_\bw$ is equivariant for the actions of $T^\ell$ on $K_\bw$ and $B^\ell$ on $P_\bw$ and thus induces a canonical inclusion $\mathcal{BS}_\bw\to \mathcal D_\bw$.  In the next section, we construct an explicit inverse for this map using factorization.

\section{Equivalence of Resolutions via Factorization\label{EquivFactSection}}

Let $\k_{s_i}=\k\cap \p_{s_i}$ denote the Lie algebra of $K_{s_i}$ for each $i=1,2,\dots,\ell$.  Then $\p_{s_i}=\k_{s_i}+\d$ is an Iwasawa decomposition of $\p_{s_i}$ and therefore the multiplication map $K_{s_i}\times D\to P_{s_i}$ is a diffeomorphism for each $i=1,2,\dots,\ell$.  Thus, $\mathbf k\colon G\to K$ restricts to a map $\mathbf k\colon P_{s_i}\to K_{s_i}$ for each $i=1,2,\dots,\ell$ and the product map $\mathbf k^\ell\colon G^\ell\to K^\ell$ then restricts to a map $\mathbf k^\ell\colon P_\bw\to K_\bw$.

\begin{notation}\label{Defn_of_phi_m}
Let $\phi_\ell\colon P_\bw\to K_\bw$ be defined by
\begin{equation}
\phi_\ell(p_1,p_2,\dots,p_\ell)=(\mathbf k(q_1),\mathbf k(q_2),\dots,\mathbf k(q_\ell))
\end{equation}
where $(q_1,q_2,\dots,q_\ell)\in P_\bw$ is obtained from $(p_1,p_2,\dots,p_\ell)\in P_\bw$ recursively by $q_1=p_1$ and $q_k=\mathbf d(q_{k-1})p_k$ for $k=2,3,\dots, \ell$.
\end{notation}

Then $\phi_\ell$ is smooth since its components are compositions of factorizations and multiplications in $G$.  Note that the correspondence $\beta_\ell\colon P_\bw\to P_\bw$ defined by $\beta(p_1,p_2,\dots,p_\ell)=(q_1,q_2,\dots,q_\ell)$ is a real algebraic diffeomorphism $P_\bw\to P_\bw$.  It is clearly a smooth map, since its components are compositions of factorizations and multiplications in $G$, but its recursive definition shows that it can easily be inverted and that the inverse is also smooth, involving compositions of factorizations, multiplications, and inversions in $G$.

\begin{lemma} The smooth map
$\phi_\ell\colon P_\bw\to K_\bw$ intertwines the actions of $B^\ell$ on $P_\bw$ and $T^\ell$ on $K_\bw$ and thus descends to a smooth map
\[
\tilde{\phi}_\ell\colon \mathcal D_\bw\to \mathcal BS_\bw.
\]
\end{lemma}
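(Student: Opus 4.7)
The plan is to make explicit the homomorphism $B^\ell\to T^\ell$ through which the intertwining occurs and then verify the required equivariance by a short induction. Since $B=TD$ and $K\cap B=T$, uniqueness of the Iwasawa factorization forces $\mathbf{k}$ to restrict to the projection $B\to T$ onto the $T$-factor, and its $\ell$-fold product gives a smooth group homomorphism $B^\ell\to T^\ell$, $(b_1,\ldots,b_\ell)\mapsto(\mathbf{k}(b_1),\ldots,\mathbf{k}(b_\ell))$. Two transformation rules for the factorization maps are needed: left $K$-equivariance gives $\mathbf{k}(hg)=h\,\mathbf{k}(g)$ and $\mathbf{d}(hg)=\mathbf{d}(g)$ for $h\in K$; and for $b\in B$, writing $t=\mathbf{k}(b)$, the fact that $T$ normalizes $D=AN$ forces $\mathbf{d}(g)t\in tD$, which together with uniqueness of factorization yields $\mathbf{k}(gb)=\mathbf{k}(g)\,t$ and $\mathbf{d}(gb)=t^{-1}\mathbf{d}(g)\,b$.

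Fix $(b_1,\ldots,b_\ell)\in B^\ell$, set $t_i=\mathbf{k}(b_i)$ with the convention $t_0=e$, and let $(p'_1,\ldots,p'_\ell)=(p_1,\ldots,p_\ell).(b_1,\ldots,b_\ell)$ with associated sequence $(q'_1,\ldots,q'_\ell)$ as in Notation \ref{Defn_of_phi_m}. I would prove by induction on $k$ the strengthened identity $q'_k=t_{k-1}^{-1}q_k b_k$ in $G$. The base case $k=1$ is immediate, since $q'_1=p_1b_1=q_1b_1$. For the inductive step, the transformation rules above give
$\mathbf{d}(q'_{k-1})=\mathbf{d}(t_{k-2}^{-1}q_{k-1}b_{k-1})=t_{k-1}^{-1}\mathbf{d}(q_{k-1})b_{k-1}$, and substituting $p'_k=b_{k-1}^{-1}p_k b_k$ produces $q'_k=\mathbf{d}(q'_{k-1})p'_k=t_{k-1}^{-1}\mathbf{d}(q_{k-1})p_k b_k=t_{k-1}^{-1}q_k b_k$, as claimed.

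Applying $\mathbf{k}$ and using left $K$-equivariance then gives $\mathbf{k}(q'_k)=t_{k-1}^{-1}\mathbf{k}(q_k)\,t_k$, which is exactly the $k$-th component of $\phi_\ell(p_1,\ldots,p_\ell).(t_1,\ldots,t_\ell)$ under the restricted $T^\ell$-action on $K_\bw$. This establishes the intertwining property, and descent to a smooth map $\tilde\phi_\ell\colon\mathcal{D}_\bw\to\mathcal{BS}_\bw$ follows automatically since both actions are free and $\phi_\ell$ is already smooth. The main obstacle is essentially bookkeeping: tracking how conjugation by the $t_i$ propagates through the recursion. It is precisely to manage this that I would work with the group-level identity $q'_k=t_{k-1}^{-1}q_k b_k$ rather than attempt to verify the relation for $\mathbf{k}(q'_k)$ in $K$ directly.
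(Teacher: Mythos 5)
Your proof is correct and follows the paper's strategy in outline — the same choice $t_i=\mathbf{k}(b_i)$, the same two factorization rules (left $K$-equivariance and the right-$B$ behavior forced by $\mathrm{Ad}(T)$-stability of $D$), and an induction on the position $k$ — but you make one genuinely better organizational choice. The paper's proof tracks two coupled auxiliary identities, one for $\mathbf{k}$ of the $k$-th coordinate of $\beta$ applied to the $B^\ell$-translate and one for $\mathbf{d}$ of the same, and must keep them in sync through the induction. You instead elevate the clean group-level identity $q'_k=t_{k-1}^{-1}q_k b_k$ (valid in $G$, before any factorization) to the inductive invariant, and only apply $\mathbf{k}$ once at the end. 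This subsumes both of the paper's auxiliary identities: applying $\mathbf{d}$ to your identity gives $\mathbf{d}(q'_k)=t_k^{-1}\mathbf{d}(q_k)b_k$, which incidentally exposes a typo in the paper's statement of its second auxiliary identity (the exponent on the leading torus factor should be $k$, not $k-1$; the paper's own base case and inductive substitution both use $t_k^{-1}$). Your version avoids that bookkeeping hazard entirely, which is precisely the advantage of proving the relation in $G$ first rather than term-by-term in $K$ and $D$.
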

\begin{proof}We need to prove that given $(p_1,p_2,\dots,p_\ell)\in P_\bw$ and $(b_1,b_2,\dots,b_\ell)\in B^\ell$ there exists $(t_1,t_2,\dots,t_\ell)\in T^\ell$ such that
\begin{equation}\label{NTSforLemma32}
\phi_\ell((p_1,p_2,\dots,p_\ell).(b_1,b_2,\dots,b_\ell))=\phi_\ell(p_1,p_2,\dots,p_\ell).(t_1,t_2,\dots,t_\ell).
\end{equation}
Set $(t_1,t_2,\dots,t_\ell)=(\mathbf k(b_1),\mathbf k(b_2),\dots,\mathbf k(b_\ell))$.  In the proof that this choice satisfies (\ref{NTSforLemma32}), it will be convenient to write $(p_k)'$ for the $k^{th}$ coordinate of $\beta(p_1,p_2,\dots,p_\ell)$. Then, the $k^{th}$ coordinate of $\beta((p_1,p_2,\dots,p_\ell).(b_1,b_2,\dots,b_\ell))$ is written $(b_{k-1}^{-1}p_kb_k)'$ while $(p_k)'=q_k$ in our previous notation.  The proof will require two auxiliary identities
\begin{equation}\label{aux1}
\mathbf k((b_{k-1}^{-1}p_{k}b_{k})')=t_{k-1}^{-1}\mathbf k(q_k)t_k, \ k=1,2,\dots,\ell
\end{equation}
and
\begin{equation}\label{aux2}
\mathbf d((b_{k-1}^{-1}p_kb_k)')=t_{k-1}^{-1}\mathbf d(q_k)b_k,\ k=1,2,\dots,\ell
\end{equation}
where $b_0=t_0$ denotes the identity in $G$.  We will prove (\ref{aux2}) by induction on $k$ and deduce (\ref{aux1}) along the way.  The $k=1$ case of (\ref{aux1}) is clear because the first coordinate of $\beta$ is the identity map on $P_{s_1}$.

For the basis step of (\ref{aux2}) we use the facts that $\mathbf d$ is right $D$-equivariant, but converts right multiplication by $T$ into conjugation by $t^{-1}$.  Indeed, if $g\in G$ and $d\in D$ then $gd=\mathbf k(g)\mathbf d(g)d$ implies that $\mathbf d(gd)=\mathbf d(g)d$ and if $t\in T$ then $gt=\mathbf k(g)\mathbf d(g)t=\mathbf k(g)t\cdot t^{-1}\mathbf d(g)t$ implies that $\mathbf d(gt)=t^{-1}\mathbf d(g)t$ because $D$ is $\mathrm{Ad}(T)$-stable.  The base case of (\ref{aux2}) is then verified by $\mathbf d((p_1b_1)')=\mathbf d(p_1b_1)=\mathbf d(p_1t_1\mathbf d(b_1))=t_1^{-1}\mathbf d(p_1)t_1\mathbf d(b_1)=t_1^{-1}\mathbf d(q_1)b_1$ since the first coordinate of $\beta$ is the identity map on $P_{s_1}$.

For the inductive step, assume that (\ref{aux2}) holds for some $k$.  Then
\begin{eqnarray*}
(b_k^{-1}p_{k+1}b_{k+1})' & = & \mathbf d((b_{k-1}^{-1}p_kb_k)')\cdot b_k^{-1}p_{k+1}b_{k+1} \\
& = & t_k^{-1}\mathbf d(q_k)b_k \cdot b_k^{-1}p_{k+1}b_{k+1} \\
& = & t_k^{-1}\mathbf d(q_k)p_{k+1}b_{k+1}
\end{eqnarray*}
using the recursive definition of the $(k+1)^{st}$ coordinate of $\beta$ and the inductive assumption.  Now we recall that $q_{k+1}=\mathbf d(q_k)p_{k+1}$, so we can substitute, factor, and insert to find
\begin{eqnarray*}
(b_k^{-1}p_{k+1}b_{k+1})' & = & t_k^{-1}q_{k+1}b_{k+1} \\
 & = & t_k^{-1} \mathbf k(q_{k+1})\mathbf d(q_{k+1})b_{k+1} \\
 & = & t_k^{-1} \mathbf k(q_{k+1})t_{k+1}\cdot t_{k+1}^{-1}\mathbf d(q_{k+1})t_{k+1}\mathbf d(b_{k+1}).
\end{eqnarray*}
From this last factorization, it follows that $\mathbf k((b_k^{-1}p_{k+1}b_{k+1})')=t_k^{-1} \mathbf k(q_{k+1})t_{k+1}$, the $k+1$ case of (\ref{aux1}) and $\mathbf d((b_k^{-1}p_{k+1}b_{k+1})')=t_{k+1}^{-1}\mathbf d(q_{k+1})t_{k+1}\mathbf d(b_{k+1})=t_{k+1}^{-1}\mathbf d(q_{k+1})b_{k+1}$ which is the $k+1$ case of (\ref{aux2}).  This proves (\ref{aux1}) and (\ref{aux2}).

Finally, by applying (\ref{aux1}), we have
\begin{eqnarray*}
\phi_\ell((p_1,p_2,\dots,p_\ell).(b_1,b_2,\dots,b_\ell))& = & (\mathbf k((p_1b_1)'),\mathbf k((b_1^{-1}p_2b_2)'),\dots,\mathbf k((b_{m-1}^{-1}p_\ell b_\ell)') \\
& = & (\mathbf k(q_1)t_1,t_1^{-1}\mathbf k(q_2)t_2,\dots,t_{m-1}^{-1}\mathbf k(q_\ell)t_\ell) \\
& = & \phi_\ell(p_1,p_2,\dots,p_\ell).(t_1,t_2,\dots,t_\ell)
\end{eqnarray*}
as was to be shown.
\end{proof}

The main result of this paper is the following.

\begin{theorem}\label{Equivalence_of_Constructions}
The induced map $\tilde{\phi}_\ell\colon \mathcal{D}_\bw\to \mathcal{BS}_\bw$ is a diffeomorphism whose inverse is the canonical inclusion $\mathcal{BS}_\bw\to \mathcal D_\bw$.
\end{theorem}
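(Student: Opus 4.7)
The plan is to prove that $\tilde{\phi}_\ell$ and the canonical inclusion $\tilde{\iota}\colon \mathcal{BS}_\bw \to \mathcal{D}_\bw$ are mutual inverses. Since $\tilde{\phi}_\ell$ is smooth by the preceding lemma and $\tilde{\iota}$ is smooth by construction, the theorem will follow from the two set-theoretic identities $\tilde{\phi}_\ell \circ \tilde{\iota} = \id_{\mathcal{BS}_\bw}$ and $\tilde{\iota} \circ \tilde{\phi}_\ell = \id_{\mathcal{D}_\bw}$, which I will verify on representatives. Both reductions use only the recursive definition in Notation \ref{Defn_of_phi_m} together with two elementary facts about Iwasawa factorization: $\mathbf{k}$ restricts to the identity on $K$, while $\mathbf{d}$ restricts to the constant map $e$ on $K$.

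For the first composition, I would pick $(k_1,\dots,k_\ell)\in K_\bw$ and trace through the recursion. Since $\mathbf{d}(k_1)=e$, the rule $q_1 = k_1$ and $q_k = \mathbf{d}(q_{k-1})k_k$ forces $q_k = k_k$ by induction on $k$. Applying $\mathbf{k}$ coordinatewise and using $\mathbf{k}|_K = \id_K$ then gives $\phi_\ell(k_1,\dots,k_\ell) = (k_1,\dots,k_\ell)$, which descends to $\tilde{\phi}_\ell \circ \tilde{\iota} = \id_{\mathcal{BS}_\bw}$.

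For the second composition, my plan is to exhibit an explicit $(b_1,\dots,b_\ell)\in B^\ell$ whose right action carries $(p_1,\dots,p_\ell)$ to $\phi_\ell(p_1,\dots,p_\ell)$ inside $P_\bw$. The natural choice is $b_k = \mathbf{d}(q_k)^{-1}\in D\subset B$, since this is precisely the element that peels off the $D$-factor of $q_k = \mathbf{k}(q_k)\mathbf{d}(q_k)$. A short computation using $q_1=p_1$ and $q_k = \mathbf{d}(q_{k-1})p_k$ then yields $p_1 b_1 = \mathbf{k}(q_1)$ and $b_{k-1}^{-1} p_k b_k = \mathbf{k}(q_k)$ for $k\geq 2$, so $\phi_\ell(p_1,\dots,p_\ell)$ and $(p_1,\dots,p_\ell)$ determine the same class in $\mathcal{D}_\bw$.

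There is no serious obstacle here beyond keeping the recursive bookkeeping straight; once one sees that $b_k = \mathbf{d}(q_k)^{-1}$ is exactly the element needed, the second calculation falls out immediately, and the collapse in the first calculation is instant from $\mathbf{d}|_K \equiv e$. The virtue of this approach is that it is entirely constructive and avoids any compactness or inverse function theorem argument: the smoothness of the inverse is already baked into the explicit formula for $\phi_\ell$, in contrast to the abstract diffeomorphism argument used for the canonical inclusion $K/T \to G/B$.
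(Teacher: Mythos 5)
Your proposal is correct and follows the same approach as the paper: verify $\tilde{\phi}_\ell\circ\tilde{\iota}=\id$ using $\mathbf{d}|_K\equiv e$, and verify $\tilde{\iota}\circ\tilde{\phi}_\ell=\id$ by exhibiting the same $b_k=\mathbf{d}(q_k)^{-1}\in B^\ell$ and checking it carries $(p_1,\dots,p_\ell)$ to $\phi_\ell(p_1,\dots,p_\ell)$ coordinatewise. The calculations you sketch are exactly those in the paper's proof.
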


\begin{proof}Let $\tilde{\iota}\colon \mathcal{BS}_\bw\to \mathcal{D}_\bw$ denote the canonical inclusion.  To prove the theorem, we will argue that $\tilde{\phi}_\ell\circ\tilde{\iota}=\mathrm{id}_{\mathcal{BS}_\bw}$ and $\tilde{\iota}\circ\tilde{\phi}_\ell=\mathrm{id}_{\mathcal D_\bw}$.   On the one hand, if $(k_1,k_2,\dots,k_\ell)\in K_\bw$ then
\[
(\phi_\ell\circ \iota)(k_1,k_2,\dots,k_\ell)=(\mathbf k(q_1),\mathbf k(q_2),\dots, \mathbf k(q_\ell))
\]
where we recall that $q_1=k_1$ and $q_j=\mathbf d(q_{j-1})k_j$ for $j=2,3,\dots,\ell$.  But if $q_{j-1}\in K$, then $\mathbf d(q_{j-1})=1\in D$ and thus $q_j=k_j\in K$.  Since $q_1=k_1$, we see that
\[
(\phi_\ell\circ\iota)(k_1,k_2,\dots,k_\ell)=(k_1,k_2,\dots,k_\ell)
\]
by finite induction.  Thus $\tilde{\phi}_\ell\circ\tilde{\iota}=\mathrm{id}_{\mathcal{BS}_\bw}$.

On the other hand, given $(p_1,p_2,\dots,p_\ell)\in P_\bw$ we recursively compute the sequence $(q_1,q_2,\dots,q_\ell)$ and set $b_j=\mathbf d(q_j)^{-1}$ for $j=1,2,\dots,\ell$.  Then $\mathbf k(q_1)=\mathbf k(p_1)=p_1\mathbf d(p_1)^{-1}=p_1b_1$.  Furthermore, for $2\le j\le \ell$ we have
\[
\mathbf k(q_j)=q_j\mathbf d(q_j)^{-1} = \mathbf d(q_{j-1})p_j\mathbf d(q_j)^{-1} = b_{j-1}^{-1}p_jb_j
\]
which means that
\[
(\iota\circ\phi_\ell)(p_1,p_2,\dots,p_\ell)=(\mathbf k(q_1),\mathbf k(q_2),\dots,\mathbf k(q_\ell))=(p_1,p_2,\dots,p_\ell).(b_1,b_2,\dots,b_\ell)
\]
with respect to the action of $B^\ell$.  Thus, $\tilde{\iota}\circ\tilde{\phi}_\ell=\mathrm{id}_{\mathcal{D}_\bw}$.
\end{proof}

\begin{corollary}
The diagram
\begin{equation}\label{resolutions_commute}
\xymatrix{ \mathcal D_\bw \ar[r]^{\rho_\bw} \ar[d]_{\tilde{\phi}_\ell} & G/B \ar[d]^{\tilde{\mathbf k}} \\
\mathcal{BS}_\bw \ar[r]^{\rho_\bw^K} & K/T
}
\end{equation}
commutes.
\end{corollary}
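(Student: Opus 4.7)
The plan is to verify commutativity directly at the level of representatives, reducing the claim to a telescoping identity together with uniqueness of the Iwasawa factorization. Fix $(p_1, \dots, p_\ell) \in P_\bw$ and let $(q_1, \dots, q_\ell)$ be the associated sequence from Notation \ref{Defn_of_phi_m}. Chasing the diagram on this element, the upper-right path sends its class in $\mathcal{D}_\bw$ to $[\mathbf k(p_1 p_2 \cdots p_\ell)]_{K/T}$, whereas the lower-left path sends it to $[\mathbf k(q_1) \mathbf k(q_2) \cdots \mathbf k(q_\ell)]_{K/T}$. I would argue that these two elements in fact agree in $K$, a fortiori in $K/T$.

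The central step is a telescoping identity, which I would prove by finite induction on $j$:
\[
p_1 p_2 \cdots p_j = \mathbf k(q_1) \mathbf k(q_2) \cdots \mathbf k(q_{j-1}) \, q_j, \qquad j = 1, 2, \dots, \ell.
\]
The base case $j = 1$ is the definition $q_1 = p_1$. For the inductive step, one writes $q_j = \mathbf k(q_j) \mathbf d(q_j)$ and applies the recursion $q_{j+1} = \mathbf d(q_j) p_{j+1}$ from Notation \ref{Defn_of_phi_m} to pass from the $j$-th version to the $(j+1)$-st.

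Taking $j = \ell$ and then factoring $q_\ell = \mathbf k(q_\ell) \mathbf d(q_\ell)$ one last time gives
\[
p_1 p_2 \cdots p_\ell = \bigl( \mathbf k(q_1) \mathbf k(q_2) \cdots \mathbf k(q_\ell) \bigr) \mathbf d(q_\ell).
\]
The first factor lies in $K$ and the second in $D$, so by uniqueness of the Iwasawa factorization $G = KD$ one concludes $\mathbf k(p_1 p_2 \cdots p_\ell) = \mathbf k(q_1) \mathbf k(q_2) \cdots \mathbf k(q_\ell)$, which is exactly the desired identity. I do not anticipate any real obstacle: the entire argument is a telescoping that aggregates, across all indices at once, the same kind of Iwasawa manipulation already used in the proof of Theorem \ref{Equivalence_of_Constructions}.
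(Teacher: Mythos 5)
Your proof is correct, but it takes a genuinely different route from the paper's. The paper proves the corollary indirectly: it observes that the inclusion diagram $K_\bw \to P_\bw \to G$ commutes trivially (a square of literal inclusions and multiplications), passes to quotients to get a commuting square with the canonical inclusions $\tilde\iota$ as vertical arrows, and then invokes Theorem \ref{Equivalence_of_Constructions} to invert those vertical arrows, identifying the inverses as $\tilde{\phi}_\ell$ and $\tilde{\mathbf k}$. In contrast, you compute both paths around the square on a representative $(p_1,\dots,p_\ell)$ and verify the identity
\[
\mathbf k(q_1)\mathbf k(q_2)\cdots\mathbf k(q_\ell)=\mathbf k(p_1p_2\cdots p_\ell)
\]
directly via the telescoping relation $p_1\cdots p_j=\mathbf k(q_1)\cdots\mathbf k(q_{j-1})\,q_j$ and uniqueness of the Iwasawa factorization $G=KD$. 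Your argument is self-contained --- it does not require Theorem \ref{Equivalence_of_Constructions} at all --- whereas the paper's version is shorter because it leans on the theorem already proved. The trade-off is that your proof re-derives at the element level what the paper obtains structurally; on the other hand, your telescoping identity is illuminating in its own right, as it makes visible exactly how the recursive definition of $\beta_\ell$ was engineered so that the $\mathbf k$-factors aggregate to $\mathbf k$ of the product. Both are valid; yours is more elementary and arguably more transparent about the mechanism.
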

\begin{proof}
The commutative diagram on the left in (\ref{diagrams}) induces the commutative diagram on the right.
\begin{equation}\label{diagrams}
\xymatrix{
P_\bw \ar[r]^{\mathrm{mult}} &  G  & & \mathcal D_\bw \ar[r]^{\rho_\bw} & G/B \\
K_\bw \ar[r]^{\mathrm{mult}} \ar[u]^{\iota} & K \ar[u]_{\iota} & & \mathcal{BS}_\bw \ar[r]^{\rho_\bw^K} \ar[u]_{\tilde{\iota}} & K/T \ar[u]_{\tilde{\iota}}
}
\end{equation}
The map $\tilde{\mathbf k}$ is the inverse to the inclusion $\tilde{\iota}\colon K/T\to G/B$ and the map $\tilde{\phi}_\ell$ is the inverse to the inclusion $\tilde{\iota}\colon \mathcal{BS}_\bw \to \mathcal D_\bw$ by Theorem \ref{Equivalence_of_Constructions}.  Thus, inverting the vertical arrows of the diagram on the right yields the result.
\end{proof}

\begin{remark}
In particular, when $\bw$ is a reduced decomposition of $w$ (so that $\ell=\ell(w)$) the map $\rho_\bw\colon \mathcal D_\bw\to \overline{X_w}$ is the Demazure resolution and $\rho_\bw^K\colon \mathcal{BS}_\bw\to \tilde{\mathbf k}(\overline{X_w})$ is the Bott-Samelson resolution.  Then $\tilde{\phi}_\ell$ gives an explicit equivalence of resolutions via factorization and the following diagram commutes.
\begin{equation}\label{resolutions_commute2}
\xymatrix{ \mathcal D_\bw \ar[r]^{\rho_\bw} \ar[d]_{\tilde{\phi}_\ell} & \overline{X_w} \ar[d]^{\tilde{\mathbf k}} \\
\mathcal{BS}_\bw \ar[r]^{\rho_\bw^K} & \tilde{\mathbf k}(\overline{X_w})
}
\end{equation}
\end{remark}

\section{Coordinates on $X_w$\label{ApplicationSection}}

As an application, we indicate how this explicit equivalence between the Demazure and Bott-Samelson resolutions can be used to compute the change of variables between standard holomorphic coordinates $(\zeta_1,\zeta_2,\dots,\zeta_\ell)$ on the complex model $X_w$ of the Schubert cell and Lu's $C^\infty$-coordinates $(z_1,\bar{z}_1,z_2,\bar{z}_2,\dots,z_\ell,\bar{z}_\ell)$ on the real model $\tilde{\mathbf k}(X_w)$ from \cite{Lu}.

\subsection{Holomorphic Coordinates on $X_w$}
First, we review the standard construction of holomorphic coordinates on $X_w$.  Let $B^-$ denote the Borel subgroup of $G$ such that $B^-\cap B=H$ and let $N^-$ denote the nil-radical of $B^-$.  For each $w\in W$, set $N_w=N\cap wN^-w^{-1}$.  Then the orbit map $N_w\to X_w\subset G/B$ defined by $n\mapsto nwB$ is a biholomorphism because for each representative $\dot w$, the subset $N_w\dot w\subset G$ is transverse to the action of $B$.  One can thus obtain holomorphic coordinates on $X_w$ from a holomorphic parameterization of the nilpotent group $N_w$.  There are many ways to do this, but the construction of the Demazure resolution of $\overline{X_w}$ suggests an explicit algebraic procedure.

Let $\langle\cdot,\cdot\rangle$ denote the Killing form for $\g$.  For each positive root $\alpha$, let $H_\alpha$ denote the coroot defined by $\langle H_\alpha,H\rangle=\alpha(H)$ for each $H\in\h$. For each simple positive root $\gamma$, choose root vectors $E_\gamma$ and $E_{-\gamma}$ for $\gamma$ and $-\gamma$, respectively, such that $\langle E_\gamma,E_{-\gamma}\rangle =1$.  Then $[E_{\gamma},E_{-\gamma}]=H_\gamma$. Set
\[\textstyle{
\check{H}_\gamma=\frac{2}{\langle\langle \gamma,\gamma\rangle\rangle}H_\gamma,\ \check{E}_\gamma=\sqrt{\frac{2}{\langle\langle \gamma,\gamma\rangle\rangle}}E_\gamma, \ \check{E}_{-\gamma}=\sqrt{\frac{2}{\langle\langle \gamma,\gamma\rangle\rangle}}E_{-\gamma}}
\]
where $\langle\langle\cdot,\cdot\rangle\rangle$ denotes the dual of the Killing form.
Then the linear map $\mathrm{sl}(2,\C)\to\g$ determined by
\[
\begin{pmatrix}
1 & 0 \\ 0 & -1
\end{pmatrix}\mapsto \check{H}_\gamma,\ \begin{pmatrix}
0 & 1 \\ 0 & 0
\end{pmatrix}\mapsto \check{E}_\gamma,\ \begin{pmatrix}
0 & 0 \\ 1 & 0
\end{pmatrix}\mapsto \check{E}_{-\gamma}
\]
is a Lie algebra homomorphism which integrates to a Lie group homomorphism
\[
\Psi_\gamma\colon \SL(2,\C)\to G.
\]
It will be convenient to think of $\gamma$ as corresponding to a unique simple reflection $s$ and denote $\Psi_\gamma$ by $\Psi_s$.  Note that by construction, $\Psi_s$ maps diagonal matrices into $H$ and unipotent upper triangular matrices into $N_s$.

The matrix
\[
\sigma=\begin{pmatrix} 0 & i \\ i & 0 \end{pmatrix}
\]
is a representative for the non-trivial element of the Weyl group of $\mathrm{SL}(2,\C)$ relative to the Cartan subalgebra of diagonal matrices.  Thus, to the decomposition $\bw=(s_1,s_2,\dots,s_\ell)$ for $w$, we can associate representatives
\[
\dot s_j=\Psi_{s_j}(\sigma), \text{ and }\dot w=\dot s_1\dot s_2\cdots \dot s_\ell
\]
in $N_G(H)$ for each reflection $s_j$ in the sequence and for the element $w$. We will write $\dot\bw=(\dot s_1,\dot s_2,\dots,\dot s_\ell)\in K_\bw\subset P_\bw$. For $\zeta_j\in\C$ we define
\[
n_{\zeta_j}=\Psi_{s_j}\begin{pmatrix} 1 & \zeta_j \\ 0 & 1\end{pmatrix}=\exp(\zeta_j\check{E}_{\gamma_j})\in N_{s_j}
\]
where $\gamma_j$ is the simple positive root associated to the $j^{th}$ reflection $s_j$ in the sequence $\bw$.  The function $\C\to N_{s_j}$ defined by $\zeta_j\mapsto n_{\zeta_j}$ is a biholomorphism.  Let $N_\bw=N_{s_1}\times N_{s_2}\times \dots \times N_{s_\ell}$.  Then $(\zeta_1,\dots,\zeta_\ell)\to (n_{\zeta_1},\dots,n_{\zeta_\ell})$ is a parametrization of $N_\bw$ by $\C^\ell$. The following result is a standard fact.

\begin{prop}\label{hol_coords}
If $\bw$ is a reduced decomposition of $w$, then
there exists a unique biholomorphism $M_{\dot \bw}\colon N_\bw\to N_w$ such that
\[
M_{\dot \bw}(n_{\zeta_1},\dots,n_{\zeta_\ell})\dot w=n_{\zeta_1}\dot s_1n_{\zeta_2}\dot s_2\dots n_{\zeta_\ell}\dot s_\ell
\]
for each $(\zeta_1,\dots,\zeta_\ell)\in\C^\ell$.
\end{prop}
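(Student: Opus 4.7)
The plan is to prove existence by induction on $\ell$; uniqueness and the biholomorphism property then follow directly from the inductive construction.

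For the base case $\ell=1$, take $M_{\dot\bw}(n_{\zeta_1})=n_{\zeta_1}\in N_w=N_{s_1}$. For the inductive step, set $\bw'=(s_1,\ldots,s_{\ell-1})$, a reduced word for $w'=s_1\cdots s_{\ell-1}$, and apply the inductive hypothesis to get
\[
n_{\zeta_1}\dot s_1\cdots n_{\zeta_{\ell-1}}\dot s_{\ell-1}=M_{\dot\bw'}(n_{\zeta_1},\ldots,n_{\zeta_{\ell-1}})\,\dot w'.
\]
Multiplying on the right by $n_{\zeta_\ell}\dot s_\ell$ and inserting $1=(\dot w')^{-1}\dot w'$ yields
\[
n_{\zeta_1}\dot s_1\cdots n_{\zeta_\ell}\dot s_\ell=M_{\dot\bw'}(\cdots)\cdot\bigl[\dot w' n_{\zeta_\ell}(\dot w')^{-1}\bigr]\cdot\dot w,
\]
which motivates the definition $M_{\dot\bw}(n_{\zeta_1},\ldots,n_{\zeta_\ell}):=M_{\dot\bw'}(\cdots)\cdot\dot w' n_{\zeta_\ell}(\dot w')^{-1}$.

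The central point is verifying that this element lies in $N_w$. Reducedness of $\bw$, i.e.\ $\ell(w's_\ell)=\ell(w')+1$, is equivalent to $w'(\gamma_\ell)>0$, where $\gamma_\ell$ is the simple root attached to $s_\ell$. Hence $\Ad(\dot w')\check E_{\gamma_\ell}$ spans the root space for the positive root $\alpha'_\ell:=s_1\cdots s_{\ell-1}(\gamma_\ell)$, so the conjugate $\dot w' n_{\zeta_\ell}(\dot w')^{-1}$ lies in the root subgroup $U_{\alpha'_\ell}\subset N$. Using the standard facts that, for a reduced word, the positive roots sent to negative by $w^{-1}$ are $\{\alpha'_k:=s_1\cdots s_{k-1}(\gamma_k):1\le k\le\ell\}$ and that the ordered multiplication $U_{\alpha'_1}\times\cdots\times U_{\alpha'_\ell}\to N_w$ is an isomorphism of varieties, one has $N_w=N_{w'}\cdot U_{\alpha'_\ell}$, so $M_{\dot\bw}(\cdots)\in N_w$.

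Uniqueness is immediate from injectivity of right multiplication by $\dot w$, and holomorphy of $M_{\dot\bw}$ is clear from its construction via multiplications and fixed conjugations in the complex algebraic group $G$. For the biholomorphism property, in the exponential coordinates on $N_w$ coming from the ordered product $U_{\alpha'_1}\cdots U_{\alpha'_\ell}$, the inductive formula makes $M_{\dot\bw}$ \emph{diagonal}: the $\ell$-th coordinate is a nonzero scalar multiple of $\zeta_\ell$ (since $\dot w' n_{\zeta_\ell}(\dot w')^{-1}=\exp(\zeta_\ell\,\Ad(\dot w')\check E_{\gamma_\ell})$), and by induction the earlier coordinates have the same diagonal form inherited from $M_{\dot\bw'}$. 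Thus $M_{\dot\bw}$ is a linear isomorphism $\C^\ell\to\C^\ell$. The only substantive ingredients beyond direct computation are the two standard facts about reduced decompositions and the ordered root-subgroup factorization of $N_w$; the rest of the argument is routine bookkeeping.
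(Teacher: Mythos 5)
Your proof is correct and takes essentially the same route as the paper's: both reduce to the observation that, because $\bw$ is reduced, $N_w$ is the ordered product of the root subgroups $U_{\alpha'_j}$ with $\alpha'_j=s_1\cdots s_{j-1}(\gamma_j)$, and that in those coordinates the map rescales each $\zeta_j$ by conjugation under $\dot w_{j-1}$. The paper writes this out in one shot (defining $n_j'=\dot w_{j-1}n_{\zeta_j}\dot w_{j-1}^{-1}$ and appealing to the iterated diffeomorphisms $N_{w_{j-1}}\times w_{j-1}N_{s_j}w_{j-1}^{-1}\to N_{w_j}$) and shows invertibility by constructing the inverse via iterated factorization, whereas you organize the same computation as an induction on $\ell$ and shortcut invertibility by noting the map is literally a diagonal linear isomorphism in those coordinates. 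The linearity observation is a minor but clean refinement that the paper leaves implicit; otherwise the two arguments are the same.
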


Thus, if we write $N_\bw\dot\bw$ for the orbit $N_{s_1}\dot s_1\times \dots\times N_{s_\ell}\dot s_\ell$ of $\dot \bw$ in $P_\bw$ under left multiplication by $N_\bw$, we obtain the following commutative diagram in which each arrow is a biholomorphism.
\[
\xymatrix{
N_\bw \ar[r]^{M_{\dot\bw}} \ar[d]_{.\dot\bw} & N_w \ar[d]^{.\dot{w}} \\
N_\bw\dot\bw \ar[r]^{\mathrm{mult}} \ar[d]_{\mathrm{mod}\,B^\ell} & N_w\dot w \ar[d]^{\mathrm{mod}\,B} \\
N_\bw\dot\bw.B^\ell \ar[r]^{\rho_\bw} & N_w\dot w.B
}
\]
The map $h_{\dot\bw}\colon \C^\ell\to \mathcal D_\bw$ defined by
\begin{equation}\label{defn_of_h_w}
h_{\dot\bw}(\zeta_1,\dots,\zeta_\ell)=[n_{\zeta_1}\dot s_1,\dots,n_{\zeta_\ell}\dot s_\ell]\in \mathcal D_{\bw}
\end{equation}
is then a biholomorphism onto the open set $N_\bw\dot\bw.B^\ell=\rho_\bw^{-1}(X_w)$ giving holomorphic coordinates on the Demazure resolution.  The composition $\rho_\bw\circ h_{\dot\bw}\colon \C^\ell\to X_w$ is a holomorphic coordinate chart on the complex model $X_w\subset G/B$ of the Schubert cell.

\subsection{Lu's coordinates on $\mathbf k(X_w)$}
In (\cite{Lu}, Theorem 1, pg. 360) Lu proved an analog of the previous proposition, giving rise to a different coordinate system on the real model $\mathbf k(X_w)\subset K/T$ of the Schubert cell.  Although it is convenient to express the formulas for these coordinates in terms of complex variables $z_j$, the coordinates are not holomorphic, in general, as will be made clear below.

First, we must choose a specific compact real form $\k$ of $\g$ as follows.   For each remaining positive root $\alpha$ choose root vectors $E_{\alpha}$ and $E_{-\alpha}$ for $\alpha$ and $-\alpha$, respectively, such that $\langle E_{\alpha},E_{-\alpha}\rangle =1$.  Then $[E_{\alpha},E_{-\alpha}]=H_{\alpha}$. Set
\[
X_\alpha=E_\alpha-E_{-\alpha}, \ Y_\alpha=i(E_\alpha+E_{-\alpha})
\]
for positive root $\alpha$.  Then the real subspace $\k=\mathrm{span}_\R\{iH_\alpha,X_\alpha,Y_\alpha:\alpha>0\}$ of $\g$ is a compact real form of $\g$ such that $\t=\mathrm{span}_\R\{iH_\alpha:\alpha>0\}$ is a Cartan subalgebra of $\k$. In this setting, $\a=i\t$.  We let $K$, $T$, and $A$ denote the real connected subgroups of $G$ integrating $\k$, $\t$, and $\a$, respectively.  This specifies factorization maps $\mathbf k\colon G\to K$ and $\mathbf d\colon G\to D=AN$.

\begin{prop}[Lu]\label{Lu_coords}
If $\bw$ is a reduced decomposition of $w$ then there exists a unique diffeomorphism $F_{\dot \bw}\colon N_\bw\to N_w$ such that
\[
\mathbf k(F_{\dot \bw}(n_{z_1},n_{z_2},\dots,n_{z_\ell})\dot w)=\mathbf k(n_{z_1}\dot s_1)\mathbf k(n_{z_2}\dot s_2)\cdots \mathbf k(n_{z_\ell}\dot s_\ell)
\]
for each $(z_1,z_2,\dots,z_\ell)\in\C^\ell$.
\end{prop}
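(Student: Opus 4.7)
The plan is to use the equivalence $\tilde{\phi}_\ell$ of Theorem \ref{Equivalence_of_Constructions} to transfer the holomorphic chart $h_{\dot\bw}$ of Proposition \ref{hol_coords} to the Bott-Samelson side, define $F_{\dot\bw}$ from the resulting identification of charts, and verify the factorization identity by tracking the $B^\ell$-adjustment between the two representatives.

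First I would introduce the smooth map $g_{\dot\bw}\colon \C^\ell \to \mathcal{BS}_\bw$ given by
\[
g_{\dot\bw}(z_1,\ldots,z_\ell) = [\mathbf{k}(n_{z_1}\dot s_1),\ldots,\mathbf{k}(n_{z_\ell}\dot s_\ell)]_{\mathcal{BS}_\bw}.
\]
Since $\mathbf{k}(n_{z_j}\dot s_j)\cdot B = n_{z_j}\dot s_j\cdot B$ lies in the open Schubert cell of $P_{s_j}/B$ for each $z_j$, the class $\tilde{\iota}(g_{\dot\bw}(z))\in\mathcal{D}_\bw$ lies in $\rho_\bw^{-1}(X_w)$ for every $z$. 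Proposition \ref{hol_coords} then supplies a unique smooth map $\zeta\colon\C^\ell \to \C^\ell$ with $\tilde{\iota}(g_{\dot\bw}(z)) = h_{\dot\bw}(\zeta(z))$, and I would define $F_{\dot\bw}(n_{z_1},\ldots,n_{z_\ell}) := M_{\dot\bw}(n_{\zeta_1(z)},\ldots,n_{\zeta_\ell(z)}) \in N_w$.

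The equation $\tilde{\iota}(g_{\dot\bw}(z)) = h_{\dot\bw}(\zeta(z))$ in $\mathcal{D}_\bw$ means the tuples $(\mathbf{k}(n_{z_1}\dot s_1),\ldots,\mathbf{k}(n_{z_\ell}\dot s_\ell))$ and $(n_{\zeta_1}\dot s_1,\ldots,n_{\zeta_\ell}\dot s_\ell)$ are $B^\ell$-related, so there exist $(b_1,\ldots,b_\ell)\in B^\ell$ with $\mathbf{k}(n_{z_j}\dot s_j) = b_{j-1}^{-1}n_{\zeta_j}\dot s_j b_j$ under the convention $b_0=e$. Multiplying these identities telescopes to
\[
k^*(z):=\mathbf{k}(n_{z_1}\dot s_1)\cdots\mathbf{k}(n_{z_\ell}\dot s_\ell) = F_{\dot\bw}(n_z)\dot w\cdot b_\ell.
\]
The desired identity $\mathbf{k}(F_{\dot\bw}(n_z)\dot w) = k^*(z)$ will follow once $b_\ell\in D$, since then right $D$-invariance of $\mathbf{k}$ together with $k^*(z)\in K$ gives $\mathbf{k}(F_{\dot\bw}(n_z)\dot w) = \mathbf{k}(F_{\dot\bw}(n_z)\dot w \cdot b_\ell) = \mathbf{k}(k^*(z)) = k^*(z)$.

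The hard part will be proving $b_j\in D$ by induction on $j$. The base case is immediate: $b_1 = \mathbf{d}(n_{z_1}\dot s_1)^{-1}\in D$ and $\zeta_1=z_1$. For the inductive step, assuming $b_{j-1}\in D$, one has $b_{j-1}\mathbf{k}(n_{z_j}\dot s_j) = (b_{j-1}n_{z_j})\dot s_j\cdot\mathbf{d}(n_{z_j}\dot s_j)^{-1} \in P_{s_j}$ with $b_{j-1}n_{z_j}\in D$, and the unique decomposition $b_{j-1}\mathbf{k}(n_{z_j}\dot s_j) = n_{\zeta_j}\dot s_j \cdot b_j$ in $N_{s_j}\dot s_j B$ must be shown to produce $b_j\in D$ rather than an element of $B$ with nontrivial $T$-component. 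I expect this to follow from an explicit Bruhat-style computation in the rank-one subgroup $\Psi_{s_j}(\SL(2,\C))$, where the fact that $b_{j-1}$ and $n_{z_j}$ both lie in $D$ forces the $T$-part of $b_j$ to be trivial. Once $b_j\in D$ is in hand, smoothness of $F_{\dot\bw}$ follows from smoothness of $\zeta$, $M_{\dot\bw}$, and Iwasawa factorization, and an analogous factorization-based construction produces a smooth inverse, yielding the diffeomorphism claim.
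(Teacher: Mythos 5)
The paper does not prove this proposition; it is quoted directly from Lu's paper. Your derivation from Theorem \ref{Equivalence_of_Constructions} and Proposition \ref{hol_coords} is therefore a genuinely different, self-contained route, and it has the additional virtue of removing a circularity lurking in Section \ref{ApplicationSection}, where commutativity of diagram (\ref{basic_diagram}) (which rests on this very proposition) is invoked to justify that each $\mathbf{k}(q_k)$ has the form $\mathbf{k}(n_{z_k}\dot s_k)$. The architecture of your argument --- defining $\zeta(z)$ by matching the two charts, telescoping the $B^\ell$-relation, and reducing the factorization identity to $b_\ell\in D$ --- is sound.

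The genuine gap is the inductive claim $b_j\in D$, which you defer to ``an explicit Bruhat-style computation in the rank-one subgroup $\Psi_{s_j}(\SL(2,\C))$.'' That does not close as stated: $b_{j-1}$ is an arbitrary element of $D=AN$, not of the rank-one subgroup, so the computation cannot be confined there. What is actually needed is the inclusion $D\dot s_j\subset N_{s_j}\dot s_j D$, which follows from the decomposition $N=N_{s_j}\cdot(N\cap\dot s_j N\dot s_j^{-1})$: writing $d=an$ with $a\in A$ and $n=n_1n_2$, $n_1\in N_{s_j}$, $n_2\in N\cap\dot s_j N\dot s_j^{-1}$, one has
\[
d\dot s_j=(an_1a^{-1})\,\dot s_j\,(\dot s_j^{-1}a\dot s_j)(\dot s_j^{-1}n_2\dot s_j)\in N_{s_j}\dot s_j D
\]
since $A$ normalizes $N_{s_j}$, $\dot s_j$ normalizes $A$, and $\dot s_j^{-1}n_2\dot s_j\in N$. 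Applying this to $b_{j-1}\mathbf{k}(n_{z_j}\dot s_j)=(b_{j-1}n_{z_j})\dot s_j\,\mathbf{d}(n_{z_j}\dot s_j)^{-1}$ with $b_{j-1}n_{z_j}\in D$ then forces $b_j\in D$ by uniqueness of the $N_{s_j}\dot s_j B$ factorization. Two smaller omissions should also be repaired: uniqueness of $F_{\dot\bw}$, which follows from injectivity of $n\mapsto\mathbf{k}(n\dot w)$ on $N_w$ (since $\dot w^{-1}N_w\dot w\subset N^-$ and $N^-\cap D=\{e\}$), and an actual construction of the smooth inverse of $\zeta$ --- your phrase ``an analogous factorization-based construction'' gestures at this but does not supply it; one way is to show $g_{\dot\bw}$ is a smooth bijection onto $(\rho_\bw^K)^{-1}(\tilde{\mathbf{k}}(X_w))$ with smooth inverse by recursively normalizing the representative $(k_1,\dots,k_\ell)$ so that each $k_j=\mathbf{k}(n_{z_j}\dot s_j)$, adjusting by the twisted $T^\ell$-action at each step.
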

As a consequence, we then obtain a commutative diagram
\[
\xymatrix{
\mathbf k^\ell(N_\bw\dot\bw).T^\ell \ar[r]^{\rho_\bw^K} & \mathbf k(N_w\dot w).T \\
\mathbf k^\ell(N_\bw\dot\bw) \ar[r]^{\mathrm{mult}} \ar[u]^{\mathrm{mod}\,T^\ell} & \mathbf k(N_w\dot w) \ar[u]_{\mathrm{mod}\,T} \\
N_\bw \ar[r]^{F_{\dot\bw}} \ar[u]^{\mathbf k^\ell(\cdot\dot\bw)} & N_w \ar[u]_{\mathbf k(\cdot\dot w)}
}
\]
in which each arrow is a diffeomorphism.  Recall that $\mathbf k(N_w\dot w).T=\tilde{\mathbf k}(X_w)$.  Thus, the map $j_{\dot\bw}\colon \C^\ell\to \mathcal{BS}_\bw$ defined by
\begin{equation}\label{defn_of_j_bw}
j_{\dot\bw}(z_1,z_2,\dots,z_\ell)=[\mathbf k(n_{z_1}\dot s_1),\dots,\mathbf k(n_{z_\ell}\dot s_\ell)]\in\mathcal{BS}_\bw
\end{equation}
is a diffeomorphism onto the open set $\mathbf k^\ell(N_\bw\dot\bw).T^\ell=(\rho_\bw^K)^{-1}(\tilde{\mathbf k}(X_w))$, giving smooth coordinates on the Bott-Samelson resolution $\mathcal{BS}_\bw$.  Furthermore, the composition $\rho_\bw^K\circ j_{\dot\bw}\colon \C^\ell\to \tilde{\mathbf k}(X_w)$ is a diffeomorphism onto $\tilde{\mathbf k}(X_w)\subset K/T$ of the Schubert cell. These are Lu's coordinates on the real model of the Schubert cell.

Despite the fact that Lu's construction involves the $\mathbf k$ factorization map, which is in general very complicated, the coordinates can be explicitly computed. A key role is played by the real function $a\colon \C\to \R$ defined by
\begin{equation}
a(z)=(1+|z|^2)^{-1/2}
\end{equation}
which occurs in the following Lemma.

\begin{lemma}\label{k_and_d_lemma}
For each $z\in \C$,
\[
\mathbf k(n_z\dot s_j)=\Psi_{s_j}\begin{pmatrix}iza(z) & ia(z) \\ ia(z) & -i\bar{z}a(z)\end{pmatrix}
\]
and
\[
\mathbf d(n_z\dot s_j)=\Psi_{s_j}\begin{pmatrix}1 & \bar{z} \\ 0 & 1\end{pmatrix}\Psi_{s_j}\begin{pmatrix}a(z)^{-1} & 0 \\ 0 & a(z)\end{pmatrix}=\exp(\bar{z}\check{E}_{\gamma_j})a(z)^{-\check{H}_{\gamma_j}}.
\]
\end{lemma}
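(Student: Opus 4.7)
The plan is to reduce everything to a matrix factorization in $\SL(2,\C)$ and then invoke the homomorphism $\Psi_{s_j}$ together with uniqueness of the Iwasawa factorization in $G$.

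First I would compute $n_z \dot s_j$ inside the image of $\Psi_{s_j}$. Since $n_z=\Psi_{s_j}\begin{pmatrix}1&z\\0&1\end{pmatrix}$ and $\dot s_j=\Psi_{s_j}(\sigma)$, multiplying the matrices gives
\[
n_z\dot s_j=\Psi_{s_j}\begin{pmatrix}iz&i\\i&0\end{pmatrix}.
\]
The whole lemma will then follow from a single matrix identity in $\SL(2,\C)$:
\[
\begin{pmatrix}iz&i\\i&0\end{pmatrix}=\begin{pmatrix}iza(z)&ia(z)\\ia(z)&-i\bar z a(z)\end{pmatrix}\begin{pmatrix}a(z)^{-1}&\bar z a(z)\\0&a(z)\end{pmatrix},
\]
which is verified by direct multiplication using $a(z)^2(1+|z|^2)=1$.

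Next I would verify that this is the Iwasawa factorization in $\SL(2,\C)$. For the first factor, the determinant equals $|z|^2 a(z)^2+a(z)^2=1$ and a direct check shows the columns are orthonormal in $\C^2$, so it lies in $\SU(2)$. The second factor is upper triangular with positive real diagonal entries, hence lies in the $AN$-subgroup of $\SL(2,\C)$ (the $D$-subgroup for the standard Iwasawa decomposition of $\SL(2,\C)$). The second factor can be rewritten as $\begin{pmatrix}1&\bar z\\0&1\end{pmatrix}\begin{pmatrix}a(z)^{-1}&0\\0&a(z)\end{pmatrix}$, which under $\Psi_{s_j}$ becomes $\exp(\bar z\check E_{\gamma_j})\,a(z)^{-\check H_{\gamma_j}}$ by definition of $\Psi_{s_j}$ on the standard generators.

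The remaining step is to check that $\Psi_{s_j}$ maps the Iwasawa factors of $\SL(2,\C)$ into the corresponding Iwasawa factors of $G$. On Lie algebras, $\mathfrak{su}(2)$ is spanned by $ih$, $e-f$, and $i(e+f)$, and these are sent by the tangent map of $\Psi_{s_j}$ to scalar multiples of $iH_{\gamma_j}$, $X_{\gamma_j}$, and $Y_{\gamma_j}$ respectively, all of which lie in $\k$ by the choice of compact real form; hence $\Psi_{s_j}(\SU(2))\subset K$. Likewise $\check H_{\gamma_j}\in\a$ (because $H_{\gamma_j}$ lies in the real span of the $H_\alpha$ and $\a=i\t$), so $\Psi_{s_j}$ carries the positive diagonal subgroup into $A$; and $\check E_{\gamma_j}\in\n$, so $\Psi_{s_j}$ carries the unipotent upper triangulars into $N$. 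Hence the factorization displayed above is a factorization of $n_z\dot s_j$ as an element of $K$ times an element of $D$, and by uniqueness of the Iwasawa decomposition $G=K\cdot D$ the two factors must equal $\mathbf{k}(n_z\dot s_j)$ and $\mathbf{d}(n_z\dot s_j)$, yielding both formulas of the lemma. The only genuinely delicate point is this last compatibility between the two Iwasawa decompositions; everything else is bookkeeping and a $2\times 2$ matrix multiplication.
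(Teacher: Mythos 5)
Your proof is correct and follows essentially the same approach as the paper's: compute $n_z\dot s_j$ as $\Psi_{s_j}$ of a $2\times 2$ matrix, give the explicit $\SL(2,\C)$ factorization into an $\SU(2)$ element times an upper triangular matrix with positive diagonal, and push it through $\Psi_{s_j}$ to conclude by uniqueness of the Iwasawa decomposition of $G$. The only cosmetic difference is that you unwind the Lie-algebra check that $\Psi_{s_j}$ carries $\SU(2)$, the positive diagonals, and the unipotent upper triangulars into $K$, $A$, and $N$ respectively, whereas the paper records these containments as immediate from the construction of $\Psi_{s_j}$ and the chosen compact real form.
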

\begin{proof}
With this choice of $K$,
\[
\Psi_{s_j}(\mathrm{SU}(2))\subset K_{s_j},\
\Psi_{s_j}\begin{pmatrix} 1 & z \\ 0 & 1 \end{pmatrix}=\exp(z\check{E}_{\gamma_j})\in N_{s_j},
\]
for each $z\in \C$ and
\[
\Psi_{s_j}\begin{pmatrix} a & 0 \\ 0 & a^{-1}\end{pmatrix}=a^{\check{H}_{\gamma_j}}\in A
\]
for each $a>0$ in $\R$. As a result, $\mathbf k(n_{z_j}\dot s_j)$ can be computed explicitly.  First, we observe that for each $z\in\C$
\[
n_{z}\dot s_j=\Psi_{s_j}\begin{pmatrix} 1 & z \\ 0 & 1\end{pmatrix}\Psi_{s_j}\begin{pmatrix}0 & i \\ i & 0\end{pmatrix}=\Psi_{s_j}\begin{pmatrix}iz & i \\ i & 0\end{pmatrix}.
\]
The equation
\[
\begin{pmatrix}iz & i \\ i & 0\end{pmatrix}=\begin{pmatrix}iza(z) & ia(z) \\ ia(z) & -i\bar{z}a(z)\end{pmatrix}\begin{pmatrix}1 & \bar{z} \\ 0 & 1\end{pmatrix}\begin{pmatrix}a(z)^{-1} & 0 \\ 0 & a(z)\end{pmatrix},
\]
where $a(z)=(1+|z|^2)^{-1/2}$, is a factorization in $\mathrm{SL}(2,\C)$ into a product of an element of $\mathrm{SU}(2)$, an upper triangular unipotent matrix, and a diagonal matrix with positive diagonal entries.  By construction, $\Psi_{s_j}$ carries this factorization into an Iwasawa decomposition of $n_z\dot s_j$.  The result follows.
\end{proof}

\subsection{Change of Coordinates}

Specializing the commutative diagram (\ref{resolutions_commute2}) to the open sets $N_\bw \dot{\bw}.B^\ell\subset \mathcal D_\bw$ and $\mathbf{k}^\ell(N_\bw\dot{\bw}).T^\ell\subset \mathcal{BS}_\bw$, and including their parameterizations $h_\bw$ from (\ref{defn_of_h_w}) and $j_\bw$ from (\ref{defn_of_j_bw}),  we obtain a commutative diagram
\begin{equation}\label{basic_diagram}
\xymatrix{\C^\ell \ar[r]^{h_\bw\,\,\,\,\,} & N_\bw\dot\bw.B^\ell \ar[d]_{\,\,\,\tilde{\phi}_\ell} \ar[r]^{\rho_{\bw}} & X_w \ar[d]^{\mathbf k} \\
\C^\ell \ar[r]^{j_\bw\,\,\,\,\,\,\,} & \mathbf k^\ell (N_\bw\dot\bw).T^\ell\ar[r]^{\,\,\,\rho_\bw^K} & \mathbf k(X_w)
}
\end{equation}
in which each arrow is a diffeomorphism. Thus $j_{\dot\bw}^{-1}\circ \tilde{\phi}_\ell\circ h_{\dot\bw}\colon \C^\ell\to\C^\ell$ gives the change of variables between the holomorphic coordinates and Lu's coordinates on the Schubert cell.

\begin{example}\label{Example1}
When $\ell(w)=1$, so that $w=s$ for some simple reflection $s$ in $W$, there is only one possible reduced sequence $\bw=(s)$ and $(\tilde{\phi}_1\circ h_{\dot\bw})(\zeta)=[\mathbf k(n_{\zeta}\dot s)]=j_{\dot\bw}(z)$ if only if $z=\zeta$.  Thus, the change of variables is the identity map.
\end{example}
\begin{remark}
For these cases, Lu's coordinate agrees with the standard holomorphic coordinate on the Schubert cell. The difference in constructions is simply one of perspective, using the real model $\mathbf k(X_s)$ in one case and the complex model $X_s$ in the other.
\end{remark}

It is possible to compute the change of variables explicitly for higher length cases because of the recursive definition $\phi_\ell$.  Set $(p_1,p_2,\dots,p_\ell)=(n_{\zeta_1}\dot s_1,n_{\zeta_2}\dot s_2,\dots, n_{\zeta_\ell}\dot s_\ell)$ in $P_\bw$.  We want to determine $(z_1,z_2,\dots,z_\ell)$ as functions of $(\zeta_1,\zeta_2,\dots,\zeta_\ell)$ such that $\tilde{\phi}_\ell[p_1,p_2,\dots,p_\ell]=[\mathbf k(n_{z_1}\dot s_1),\mathbf k(n_{z_2}\dot s_2),\dots,\mathbf k(n_{z_\ell})]$.  But $\phi_\ell(p_1,p_2,\dots,p_\ell)=(\mathbf k(q_1),\mathbf k(q_2),\dots,\mathbf k(q_\ell))$ where $(q_1,q_2,\dots,q_\ell)\in P_\bw$ is determined recursively from $(p_1,p_2,\dots,p_\ell)$ as in Notation \ref{Defn_of_phi_m}. The basic algorithm is then:
\begin{enumerate}
\item Factor $q_1=p_1=n_{\zeta_1}\dot s_1$ as $\mathbf k(q_1)\mathbf d(q_1)$ using Lemma \ref{k_and_d_lemma}.  As in Example \ref{Example1}, we determine that $z_1=\zeta_1$ and $\mathbf d(q_1)=\exp(\bar{\zeta}_1\check{E}_{\gamma_1})a(\zeta_1)^{-\check{H}_{\gamma_1}}$.
\item For $k$ from $2$ to $\ell$, first compute $q_k=\mathbf d(q_{k-1})p_k\in P_{s_k}$ and then rewrite $q_k$ in the form $n_{z_k}\dot s_kd_k$ for some $z_k\in \C$ and $d_k\in D$ depending on $\zeta_1,\dots,\zeta_k$.  This is possible because $\mathbf k(q_k)$ must have the form $\mathbf k(n_{z_k}\dot s_k)$ due to the fact that the diagram (\ref{basic_diagram}) commutes.  As a consequence, one obtains $\mathbf d(q_k)=\mathbf d(n_{z_k}\dot s_k)d_k$ for the next case.
\end{enumerate}
The process of rewriting $q_k$ in the form $q_k=n_{z_k}\dot s_kd_k$ will require several basic computational lemmas and, in practice, requires a detailed understanding of the structure of $G$.  For brevity, we introduce the notation
\[
\check{H}_\alpha=\frac{2}{\langle\langle\alpha,\alpha\rangle\rangle}H_\alpha
\]
for to the coroot associated to the root $\alpha$.  The proofs of the following three lemmas are straightforward computations.  Since our goal in this section is to illustrate their use in computing the change of variables, we omit their proofs.

\begin{lemma}\label{conjugation_by_a_lemma}
If $\alpha$ and $\beta$ are positive roots, then
\[
a^{-\check{H}_\alpha}\exp(u\check{E}_\beta)=\exp(a^{-2\frac{\langle\langle\alpha,\beta\rangle\rangle}{\langle\langle\alpha,\alpha\rangle\rangle}}uE_\beta)a^{-\check{H}_\alpha}
\]
for each $u\in\C$ and $a>0$ in $\R$.
\end{lemma}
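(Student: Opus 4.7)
The plan is to reduce the claim to the standard identity $g\exp(X)g^{-1}=\exp(\Ad(g)X)$, applied with $g=a^{-\check{H}_\alpha}$ and $X=u\check{E}_\beta$. Multiplying the desired equation on the right by $a^{\check{H}_\alpha}$, it suffices to establish
\[
\Ad(a^{-\check{H}_\alpha})\check{E}_\beta=a^{-2\frac{\langle\langle\alpha,\beta\rangle\rangle}{\langle\langle\alpha,\alpha\rangle\rangle}}\check{E}_\beta,
\]
since the exponential map is equivariant under conjugation. (Incidentally, this reading clarifies what appears to be a typographical slip in the statement: the $E_\beta$ on the right side should be $\check{E}_\beta$, as the conjugation rescales a root vector without affecting its normalization.)

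Next, since $\check{E}_\beta$ is a root vector for $\beta$, one has $[H,\check{E}_\beta]=\beta(H)\check{E}_\beta$ for every $H\in\h$, which exponentiates to $\Ad(\exp(H))\check{E}_\beta=e^{\beta(H)}\check{E}_\beta$. For real $a>0$ I would define $a^{\check{H}_\alpha}:=\exp(\log(a)\,\check{H}_\alpha)\in A$ and specialize to $H=-\log(a)\,\check{H}_\alpha$, which yields $\Ad(a^{-\check{H}_\alpha})\check{E}_\beta=a^{-\beta(\check{H}_\alpha)}\check{E}_\beta$. It then remains only to compute the single scalar $\beta(\check{H}_\alpha)$.

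For this I would invoke the defining relation $\langle H_\gamma,H\rangle=\gamma(H)$ for the coroot $H_\gamma$, specialized to $\gamma=\beta$ and $H=H_\alpha$, giving $\beta(H_\alpha)=\langle H_\beta,H_\alpha\rangle=\langle\langle\alpha,\beta\rangle\rangle$ by definition of the dual Killing form. Dividing through by the normalization in $\check{H}_\alpha=\frac{2}{\langle\langle\alpha,\alpha\rangle\rangle}H_\alpha$ produces $\beta(\check{H}_\alpha)=2\frac{\langle\langle\alpha,\beta\rangle\rangle}{\langle\langle\alpha,\alpha\rangle\rangle}$. Substituting back into the displayed identity above and then multiplying on the right by $a^{-\check{H}_\alpha}$ completes the proof.

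There is no genuine obstacle here; everything reduces to a single eigenvalue computation for the adjoint action of a one-parameter subgroup of $A$ on a root space. The only mild care required is bookkeeping of signs and the factor of $2$ in the coroot normalization, and verifying that $a^{\check{H}_\alpha}$ is a well-defined element of $A$ for real $a>0$ (which follows because $\check{H}_\alpha\in i\t=\a$, so $\log(a)\check{H}_\alpha$ genuinely lies in $\a$).
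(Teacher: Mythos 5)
Your proof is correct, and it is precisely the computation the paper has in mind: the paper explicitly omits the proof, remarking only that the three computational lemmas of Section~\ref{ApplicationSection} are ``straightforward computations,'' and yours is the natural one---conjugation of $\exp(u\check{E}_\beta)$ by the one-parameter subgroup reduces via $\Ad(\exp H)=\exp(\ad H)$ to the single eigenvalue $\beta(\check{H}_\alpha)$, and the definitions $\langle H_\beta,H\rangle=\beta(H)$, $\langle\langle\alpha,\beta\rangle\rangle=\langle H_\alpha,H_\beta\rangle$, and $\check{H}_\alpha=\tfrac{2}{\langle\langle\alpha,\alpha\rangle\rangle}H_\alpha$ assemble to give $\beta(\check{H}_\alpha)=2\tfrac{\langle\langle\alpha,\beta\rangle\rangle}{\langle\langle\alpha,\alpha\rangle\rangle}$. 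You are also right that the $E_\beta$ on the right of the displayed identity is a typographical slip for $\check{E}_\beta$; the subsequent uses of the lemma in Example~\ref{Example2} confirm this reading. Your side remark that $\check{H}_\alpha\in\a$ (so $a^{\check{H}_\alpha}=\exp(\log(a)\check{H}_\alpha)$ genuinely lies in $A$) is a worthwhile sanity check and uses the positive-definiteness of the dual Killing form on the real span of the roots.
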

\begin{lemma}\label{conjugation_by_n_lemma}
If $\alpha$ and $\beta$ are positive roots then
\[
\exp(u_1E_{\alpha})\exp(u_2E_{\beta})=\exp(u_2E_\beta)\exp(u_1u_2[E_\alpha,E_\beta])\exp(u_1E_{\alpha})
\]
for each $u_1,u_2\in\C$.
\end{lemma}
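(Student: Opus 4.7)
The plan is to reduce the identity to a conjugation statement via the adjoint action. Setting $X = u_1 E_\alpha$ and $Y = u_2 E_\beta$, it suffices to establish
\[
\exp(X)\exp(Y)\exp(-X) = \exp(Y)\exp([X,Y]),
\]
since multiplying on the right by $\exp(X)$ and substituting $[X,Y] = u_1 u_2 [E_\alpha, E_\beta]$ yields the stated formula. By the standard identity relating conjugation in $G$ to the adjoint representation, the left-hand side equals $\exp(\Ad(\exp X) Y) = \exp\bigl(\exp(u_1 \ad E_\alpha)(u_2 E_\beta)\bigr)$.

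The first key step is to expand $\exp(u_1 \ad E_\alpha)(u_2 E_\beta)$ as a power series in $\ad(E_\alpha)$. Its $k^{\text{th}}$ term lies in the root space $\g_{k\alpha+\beta}$, so whenever $2\alpha+\beta$ is not a root of $\g$, one has $[E_\alpha,[E_\alpha,E_\beta]] = 0$ and the series truncates after two terms to $u_2 E_\beta + u_1 u_2 [E_\alpha,E_\beta]$.

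The second key step is to factor the resulting single exponential as a product. Under the analogous hypothesis that $\alpha+2\beta$ is not a root, $[E_\beta,[E_\alpha,E_\beta]]=0$, so $u_2 E_\beta$ and $u_1 u_2 [E_\alpha,E_\beta]$ commute; the exponential then splits as $\exp(u_2 E_\beta)\exp(u_1 u_2 [E_\alpha,E_\beta])$, completing the argument when combined with the first step.

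The main obstacle is really just bookkeeping about when these two triple-bracket vanishing conditions hold. For a generic pair of positive roots in a semisimple Lie algebra they need not both vanish, so the identity as stated is implicitly restricted to those pairs $(\alpha,\beta)$ for which neither $2\alpha+\beta$ nor $\alpha+2\beta$ is a root; otherwise additional Serre-type correction factors from higher iterated brackets would appear on the right. In the recursive algorithm sketched in Section~\ref{ApplicationSection}, the relevant root pairs all satisfy these constraints, which is precisely why the single factor $\exp(u_1u_2[E_\alpha,E_\beta])$ suffices.
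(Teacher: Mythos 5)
Your argument is the natural one: pass to conjugation, use $\exp(X)\exp(Y)\exp(-X)=\exp(e^{\ad X}Y)$, expand the adjoint series $e^{u_1\ad E_\alpha}(u_2E_\beta)$ in root spaces, and then split the single exponential.  The paper does not print a proof of Lemma~\ref{conjugation_by_n_lemma} (it and its companions are dismissed as ``straightforward computations''), so there is no argument to compare against in detail, but this is surely the intended route.

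More importantly, you have correctly flagged that the lemma is \emph{false} for an arbitrary pair of positive roots.  The series $e^{u_1\ad E_\alpha}(u_2E_\beta)=u_2E_\beta+u_1u_2[E_\alpha,E_\beta]+\tfrac{u_1^2u_2}{2}[E_\alpha,[E_\alpha,E_\beta]]+\cdots$ only terminates after the linear term when $2\alpha+\beta$ is not a root, and the truncated exponential only factors as stated when $[E_\beta,[E_\alpha,E_\beta]]=0$, i.e.\ when $\alpha+2\beta$ is not a root.  In a simply-laced $\g$ both conditions hold automatically for distinct positive roots, but they can fail otherwise: in type $C_2$ with $\alpha$ the short and $\beta$ the long simple root, $2\alpha+\beta$ is a root, so $e^{u_1\ad E_\alpha}(u_2E_\beta)$ has a nonzero component in $\g_{2\alpha+\beta}$ and the claimed factorization must pick up an extra term.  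So the statement needs the hypothesis you identify (or, more simply, a restriction to simply-laced type).  One caveat on your closing remark: it is clear that these constraints hold throughout the $\SL(3,\C)$ example worked out in the paper (type $A_2$), but for a general non-simply-laced $G$ the recursive algorithm can encounter pairs of simple roots where they fail, and the correction factors you mention would then have to be tracked.  Since these three lemmas appear only in the illustrative Section~\ref{ApplicationSection}, this does not touch the main theorem, but the blanket wording of Lemma~\ref{conjugation_by_n_lemma} ought to carry the hypothesis.
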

\begin{lemma}\label{conjugation_by_s_lemma}
If $s$ is a simple reflection and $\alpha$ is a root, then
\[
\dot s^{-1}a(u)^{-\check{H}_\alpha}\dot s=a(u)^{-\frac{\langle\langle s.\alpha,s.\alpha\rangle\rangle}{\langle\langle\alpha,\alpha\rangle\rangle}\check{H}_{s.\alpha}}
\]
for each $u\in\C$.
\end{lemma}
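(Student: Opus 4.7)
The plan is to convert the lemma's multiplicative identity into an additive one via the exponential map, so that the whole statement reduces to a computation of $\Ad(\dot s^{-1})$ on the Cartan subalgebra $\h$. Since $a(u)=(1+|u|^2)^{-1/2}$ is a positive real number, one may write $a(u)^{-\check{H}_\alpha}=\exp(-\log a(u)\cdot \check{H}_\alpha)$, and the general identity $g^{-1}\exp(X)g=\exp(\Ad(g^{-1})X)$ then gives
\[
\dot s^{-1}a(u)^{-\check{H}_\alpha}\dot s=\exp\bigl(-\log a(u)\cdot \Ad(\dot s^{-1})\check{H}_\alpha\bigr).
\]
Thus the lemma will follow as soon as I show
\[
\Ad(\dot s^{-1})\check{H}_\alpha=\frac{\langle\langle s.\alpha, s.\alpha\rangle\rangle}{\langle\langle\alpha,\alpha\rangle\rangle}\check{H}_{s.\alpha}.
\]

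For that identity I would first compute the action on the Killing-dual vector $H_\alpha$. Because $\dot s\in N_G(H)$ and its image in $W$ is the simple reflection $s$, the restriction of $\Ad(\dot s)$ to $\h$ is the Weyl reflection $s$. Using invariance of the Killing form, for any $H\in\h$,
\[
\langle \Ad(\dot s^{-1})H_\alpha,H\rangle=\langle H_\alpha,\Ad(\dot s)H\rangle=\alpha(s.H)=(s.\alpha)(H),
\]
which is the defining property of $H_{s.\alpha}$, so $\Ad(\dot s^{-1})H_\alpha=H_{s.\alpha}$.

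Finally, I would translate between the $H$-normalization and the coroot normalization using $\check{H}_\beta=\tfrac{2}{\langle\langle\beta,\beta\rangle\rangle}H_\beta$ applied to $\beta=\alpha$ and $\beta=s.\alpha$; this immediately produces the length-ratio factor $\langle\langle s.\alpha,s.\alpha\rangle\rangle/\langle\langle\alpha,\alpha\rangle\rangle$, and re-exponentiating yields the stated identity. I do not anticipate any real obstacle: the argument is a short exponentiation-and-conjugation computation, and the only thing to keep careful track of is the passage between $H_\alpha$ and $\check{H}_\alpha$. It is worth noting that, because $\Ad(\dot s)$ preserves the Killing form on $\h$, the ratio $\langle\langle s.\alpha,s.\alpha\rangle\rangle/\langle\langle\alpha,\alpha\rangle\rangle$ is in fact equal to $1$, so the lemma can be written more compactly as $\dot s^{-1}a(u)^{-\check{H}_\alpha}\dot s=a(u)^{-\check{H}_{s.\alpha}}$; the form stated in the paper is the one that naturally emerges from the proof strategy above.
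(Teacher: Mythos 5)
Your proposal is correct and follows essentially the same route as the paper: both reduce the statement to the linear-algebra identity $\Ad(\dot s^{-1})\check H_\alpha=\frac{\langle\langle s.\alpha,s.\alpha\rangle\rangle}{\langle\langle\alpha,\alpha\rangle\rangle}\check H_{s.\alpha}$ and establish it by pairing against an arbitrary $H\in\h$ and invoking the $\Ad$-invariance of the Killing form; your only cosmetic deviation is proving $\Ad(\dot s^{-1})H_\alpha=H_{s.\alpha}$ first and then rescaling to coroots, whereas the paper computes directly with $\check H_\alpha$. Your closing observation that $W$-invariance of $\langle\langle\cdot,\cdot\rangle\rangle$ forces the ratio to equal $1$ is correct and a worthwhile remark, though not part of the proof.
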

\begin{example}\label{Example2}
When $\ell(w)=2$, $\bw=(s_1,s_2)$ with $s_1\not=s_2$.  We set $p_1=n_{\zeta_1}\dot s_1$, $p_2=n_{\zeta_2}\dot s_2$ and $q_1=p_1$, $q_2=\mathbf d(q_1)p_2$.  We know that $z_1=\zeta_1$.  Following the algorithm, we compute
\begin{eqnarray*}
q_2
& = & \exp(\bar{\zeta}_1\check{E}_{\gamma_1})a(\zeta_1)^{-\check{H}_{\gamma_1}}\exp(\zeta_2\check{E}_{\gamma_2})\dot s_2 \\
& = & \exp(\bar{\zeta}_1\check{E}_{\gamma_1})\exp\left(a(\zeta_1)^{-2\frac{\langle\langle\gamma_1,\gamma_2\rangle\rangle}{\langle\langle\gamma_1,\gamma_1\rangle\rangle}}\zeta_2 \check{E}_{\gamma_2}\right)\dot s_2 a(\zeta_1)^{-\check{H}_{s_2.\gamma_1}}
\end{eqnarray*}
by Lemma \ref{conjugation_by_a_lemma} and Lemma \ref{conjugation_by_s_lemma}.
Now, we use Lemma \ref{conjugation_by_n_lemma} to move $\exp(\bar{\zeta}_1\check{E}_{\gamma_1})$ to the right, obtaining
\begin{equation}
q_2= \exp\left(a(\zeta_1)^{-2\frac{\langle\langle\gamma_1,\gamma_2\rangle\rangle}{\langle\langle\gamma_1,\gamma_1\rangle\rangle}}\zeta_2 \check{E}_{\gamma_2}\right)\dot s_2 d_2
\end{equation}
where
\begin{equation}
d_2=  \exp\left(a(\zeta_1)^{-2\frac{\langle\langle \gamma_1,\gamma_2\rangle\rangle}{\langle\langle \gamma_1,\gamma_1\rangle\rangle}}\bar{\zeta}_1\zeta_2(\dot s_2^{-1}.[\check{E}_{\gamma_1},\check{E}_{\gamma_2}])\right)\exp(\bar{\zeta}_1(\dot s_2^{-1}.\check{E}_{\gamma_1}))a(\zeta_1)^{-\check{H}_{s_2.\gamma_1}}\label{factor_d}
\end{equation}
by Lemma \ref{conjugation_by_n_lemma}. But $d_2\in D$ because $[E_{\gamma_1},E_{\gamma_2}]\in \g_{\gamma_1+\gamma_2}$, and thus $\dot s_2^{-1}.[E_{\gamma_1},E_{\gamma_2}]\in \g_{\gamma_1}\subset \n$ while $\dot s_2^{-1}.\check{E}_{\gamma_1}\in\n$ as well.   Thus,
$\mathbf k(q_2)=\mathbf k(n_{z_2}\dot s_2)=\mathbf k(\exp(z_2\check{E}_{\gamma_2})\dot s_2)$ where $z_2=a(\zeta_1)^{-2\frac{\langle\langle \gamma_1,\gamma_2\rangle\rangle}{\langle\langle \gamma_1,\gamma_1\rangle\rangle}}\zeta_2$.  Therefore,
\begin{equation}\label{lis2formulas}
z_1=\zeta_1 \text{ and }z_2=(1+|\zeta_1|^2)^{\frac{\langle\langle\gamma_1,\gamma_2\rangle\rangle}{\langle\langle \gamma_1,\gamma_1\rangle\rangle}}\zeta_2.
\end{equation}
\end{example}
\begin{remark}From (\ref{lis2formulas}) we see that
when the sequence $\bw=(s_1,s_2)$ of simple reflections is such that the associated simple positive roots $\gamma_1$ and $\gamma_2$ are orthogonal with respect to (the dual of) the Killing form, the change of variables reduces again to the identity map.   So, in that case, Lu's coordinates again agree with the standard holomorphic coordinates.  But when $\langle\langle \gamma_1,\gamma_2\rangle\rangle\not=0$, then $z_2$ depends on $\bar{\zeta}_1$ and thus Lu's coordinates are not holomorphic. This analysis shows that this change of character is not an artifact of different perspectives between the complex and real models of the Schubert cell, but an intrinsic difference due to the role of the real algebraic factorization maps and the nature of the sequence $\bw$.
\end{remark}
\begin{example}
We conclude with a more specific example of length 3.  Set $G=\mathrm{SL}(3,\C)$, $B$ equal to the set of upper triangular matrices, $H$ equal the set of diagonal matrices, and $K=\SU(3)$.  Let $\gamma_1=\lambda_1-\lambda_2$ and $\gamma_2=\lambda_2-\lambda_3$ where $\lambda_j$ is the linear function on $\mathfrak{sl}(3,\C)$ which selects the $(j,j)$ entry.  Then $\gamma_1$ and $\gamma_2$ are simple positive roots and we let $r_1$ and $r_2$ denote the corresponding simple reflections.

Let $w$ equal the longest element of the Weyl group.  The sequence $\bw=(s_1,s_2,s_3)=(r_1,r_2,r_1)$ is a reduced decomposition of $w$ and we let $\gamma_3=\gamma_1$ denote positive simple root associated to $s_3=r_1$.  The determination of $z_1$ and $z_2$ is the same as in Example \ref{Example2}.  In this case, $\langle\langle \gamma_j,\gamma_j\rangle\rangle=2$ for each $j=1,2,3$ while $\langle\langle \gamma_j,\gamma_{j+1}\rangle\rangle=-1$ for $j=1,2$.  This simplifies the powers of the function $a(\cdot)$ that appear in the formulas.  In particular, for $z_1$ and $z_2$, we have
\[
z_1=\zeta_1 \text{ and }z_2=\frac{\zeta_2}{\sqrt{1+|\zeta_1|^2}}
\]
from equation (\ref{lis2formulas}).

To determine $z_3$, we first simplify the factor $d_2$ from (\ref{factor_d}) in this specific setting.  Here, $\check{E}_{\gamma_1}=\check{E}_{\gamma_3}=E_{12}$ and $\check{E}_{\gamma_2}=E_{23}$ where $E_{ij}$ denotes the matrix with 1 in position $(i,j)$ and zeros elsewhere.  Then we can quickly verify
\[
[\check{E}_{\gamma_1},\check{E}_{\gamma_2}]=E_{13},\ \dot s_2^{-1}.[\check{E}_{\gamma_1},\check{E}_{\gamma_2}]=iE_{12}, \text{ and }\dot s_2^{-1}.\check{E}_{\gamma_1}=iE_{13}
\]
by direct computation. Furthermore, $\check{H}_{\gamma_1}=\check{H}_{\gamma_3}=E_{11}-E_{22}$ which we denote by $H_{12}$ and $\check{H}_{\gamma_2}=E_{22}-E_{33}$ which we denote by $H_{23}$, for brevity.  Then $s_2.\gamma_1=\gamma_1+\gamma_2$ and $s_3.\gamma_2=s_1.\gamma_2=\gamma_1+\gamma_2$. Set $H_{13}=E_{11}-E_{33}=\check{H}_{\gamma_1+\gamma_2}$. Then, specializing (\ref{factor_d}), we obtain
\[
d_2=  \exp\left(a(\zeta_1)\bar{\zeta}_1\zeta_2iE_{12}\right)\exp(\bar{\zeta}_1iE_{13})a(\zeta_1)^{-H_{13}}.
\]
Following the algorithm, we compute that
\[
\mathbf d(q_2)=\mathbf d(n_{z_2}\dot s_2)d_2=\exp(\bar{z}_2E_{23})a(z_2)^{-H_{23}}d_2
\]
by Lemma \ref{k_and_d_lemma}.  Then
\begin{eqnarray*}
q_3&=&\mathbf d(q_2)p_3 \\
& = &\mathbf d(\exp(z_2E_{23})\dot s_2)d_2 \,n_{\zeta_3}\dot s_3\\
& = & \exp(\bar{z}_2E_{23})a(z_2)^{-H_{23}}\exp(a(\zeta_1)i\bar{\zeta}_1\zeta_2 E_{12})\exp(i\bar{\zeta}_1E_{13})a(\zeta_1)^{-H_{13}}\exp(\zeta_3E_{12})\dot s_3.
\end{eqnarray*}
Our goal is to write this as $\exp(z_3E_{12})\dot s_3d_3$ for some $d_3\in D$ and a unique $z_3\in\C$.  To do that, we first move $a(\zeta_1)^{-H_{13}}$ to the right using Lemma \ref{conjugation_by_a_lemma} and Lemma \ref{conjugation_by_s_lemma} to write
\[
q_3=\exp(\bar{z}_2E_{23})a(z_2)^{-H_{23}}\exp(a(\zeta_1)i\bar{\zeta}_1\zeta_2 E_{12})\exp(i\bar{\zeta}_1E_{13})\exp(a(\zeta_1)^{-1}\zeta_3E_{12})\dot s_3a(\zeta_1)^{H_{23}}
\]
because $s_3.(\gamma_1+\gamma_2)=\gamma_2$.
Then we can interchange the factors $\exp(i\bar{\zeta}_1E_{13})$ and $\exp(a(\zeta_1)\zeta_3E_{12})$ because $E_{13}$ and $E_{12}$ commute.  Thus
\begin{eqnarray*}
q_3 & =& \exp(\bar{z}_2E_{23})a(z_2)^{-H_{23}}\exp(a(\zeta_1)(i\bar{\zeta}_1\zeta_2 E_{12})\exp(a(\zeta_1)^{-1}\zeta_3E_{12})\dot s_3\exp(\bar{\zeta}_1E_{23})a(\zeta_1)^{H_{23}} \\
& = & \exp(\bar{z}_2E_{23})a(z_2)^{-H_{23}}\exp(a(\zeta_1)(i\bar{\zeta}_1\zeta_2 +a(\zeta_1)^{-2}\zeta_3)E_{12})\dot s_3\exp(\bar{\zeta}_1E_{23})a(\zeta_1)^{H_{23}}.
\end{eqnarray*}
Next, we move the term $a(z_2)^{-H_{23}}$ to the right using Lemma \ref{conjugation_by_a_lemma} and Lemma \ref{conjugation_by_s_lemma}.  This gives
\begin{eqnarray*}
q_3 & =&  \exp(\bar{z}_2E_{23})\exp(a(z_2)a(\zeta_1)(i\bar{\zeta}_1\zeta_2 +a(\zeta_1)^{-2}\zeta_3)E_{12})a(z_2)^{-H_2}\dot s_3 \exp(\bar{\zeta}_1E_{23})a(\zeta_1)^{H_1}\\
& =&  \exp(\bar{z}_2E_{23})\exp(a(z_2)a(\zeta_1)(i\bar{\zeta}_1\zeta_2 +a(\zeta_1)^{-2}\zeta_3)E_{12})\dot s_3a(z_2)^{H_1} \exp(\bar{\zeta}_1E_{23})a(\zeta_1)^{H_1}
\end{eqnarray*}
since $s_3=r_1$.  Finally, we move the term $\exp(\bar{z}_2E_{23})$ to the right using Lemma \ref{conjugation_by_n_lemma} to find
\[
q_3 = \exp(a(z_2)a(\zeta_1)(i\bar{\zeta}_1\zeta_2 +a(\zeta_1)^{-2}\zeta_3)E_{12})\dot s_3d_3
\]
where $d_3\in D$.  Thus $\mathbf k(q_3)=\mathbf k(n_{z_3}\dot s_3)$ where $z_3=a(z_2)a(\zeta_1)(i\bar{\zeta}_1\zeta_2 +a(\zeta_1)^{-2}\zeta_3)$.
The expression $a(z_2)a(\zeta_1)=a(a(\zeta_1)\zeta_2)a(\zeta_1)$ simplifies to $(1+|\zeta_1|^2+|\zeta_2|^2)^{-1/2}$.  Thus, in total,
\begin{equation}\label{SL3_change}
z_1=\zeta_1,\ z_2=\frac{\zeta_2}{\sqrt{1+|\zeta_1|^2}},\text{ and }z_3=\frac{i\bar{\zeta}_1\zeta_2+\zeta_3(1+|\zeta_1|^2)}{\sqrt{1+|\zeta_1|^2+|\zeta_2|^2}}.
\end{equation}
\end{example}
\begin{remark}
On pg. 365 of \cite{Lu}, Lu considers this same example and computes the change of variables between the coordinates $z_1,z_2,z_3$ and variables $u_1,u_2,u_3$.  This second set of variables should not be confused with $\zeta_1,\zeta_2,\zeta_3$.  The relationship between them is given by the diffeomorphism $M_\bw\colon N_\bw\to N_w$.  Specifically,
\[
\begin{bmatrix}
1 & u_1 & u_3 \\
0 & 1 & u_2 \\
0 & 0 & 1
\end{bmatrix}
=
n_{\zeta_1}\dot s_1n_{\zeta_2}\dot s_2n_{\zeta_3}\dot s_3\dot w^{-1}=\begin{bmatrix}
1 & \zeta_1 & i\zeta_2+\zeta_1\zeta_3 \\
0 & 1 & \zeta_3 \\
0 & 0 & 1
\end{bmatrix}
\]
which means $u_1=\zeta_1$, $u_2=\zeta_3$, and $u_3=i\zeta_2+\zeta_1\zeta_3$.  One can confirm that with these substitutions, our change of variables (\ref{SL3_change}) agrees with the one given at the top of page 366 of \cite{Lu} obtained via the Gram-Schmidt process.
\end{remark}

\end{document}